\def\div{{\rm div}}
\def\O{\Omega}
\def\OT{\Omega\times(0,T)}
\def\be{\begin{equation}}
\def\ee{\end{equation}}
\def\R{\mathbb{R}}
\def\sign{{\rm sign}}
\newtheorem{thm}{\bf Theorem}[section]
\newtheorem{prop}[thm]{\bf Proposition}
\newtheorem{Def}{\bf Definition}
\newtheorem{rmk}{\bf Remark}[section]
\newcommand{\CQFD}{\hfill $\blacksquare$}
\newenvironment{proof}{{\bf Proof\\}}{\CQFD}
\def\o{\overline}
\def\a{\alpha}
\def\b{\beta}
\def\Dd{\mathcal D}
\def\Rr{\mathcal R}
\def\Ss{\mathcal S}
\def\Uu{\mathcal U}
\def\Ll{\mathcal L}
\def\eps{\varepsilon}
\def\k{\kappa}
\def\beqn{\begin{eqnarray}}
\def\eeqn{\end{eqnarray}}
\def\ds{\displaystyle}
\def\nn{\nonumber}
\def\grad{{\bf \nabla}}
\def\supp{{\rm supp}}
\def\msc#1{{\bf Mathematical Subject Classification:}~#1}
\def\kwd#1{{\bf Keywords:}~#1}
\begin{document}
\title{On the time continuity of entropy solutions}
\author{Cl\'ement Canc\`es\footnote{UPMC Univ Paris 06, UMR 7598, Laboratoire Jacques-Louis Lions, BC187, 4 place Jussieu F-75005, Paris, France,
 \href{mailto:cances@ann.jussieu.fr}{\tt cances@ann.jussieu.fr}}\ $^,$\footnote{corresponding author. tel: 0033 1 44 27 71 69, fax: 0033 1 44 27 72 00}\ , 
 Thierry Gallou\"et\footnote{LATP, Universit\'e de Provence, 39, rue F. Joliot Curie, 13453 Marseille Cedex 13,  \href{mailto:gallouet@latp.univ-mrs.fr}{\tt gallouet@latp.univ-mrs.fr}}
}

\date{}
\maketitle
\abstract{We show that any entropy solution $u$ of a convection diffusion equation 
$\partial_t u + \div F(u)-\Delta\phi(u) =b$ in $\OT$ belongs to $C([0,T),L^1_{\rm loc}(\o\O))$.
 The proof does not use the uniqueness of the solution.}
 \vskip 10pt
 \msc{35L65, 35B65, 35K65}
 \vskip 10pt 
 \kwd{Entropy solution, time continuity, scalar conservation laws}
\section{The problem, and main result}

Convection diffusion equations appear in a large class of problems, and have been widely studied. 
We consider in the sequel only equations under conservative form:
\be\label{eq_1}
\partial_t u + \div F(u)-\Delta\phi(u) =b,
\ee
so that we can give some sense to \eqref{eq_1} in the distributional sense.
In this paper, we consider entropy solutions of \eqref{eq_1} that do not take into account any 
boundary condition, or condition for $|x|\rightarrow +\infty$. 

The proof does not use a $L^1$-contraction principle (see e.g. Alt \& Luckaus \cite{AL83} or Otto 
\cite{Otto96}), so that it can be applied in case where uniqueness is not insured, like for example 
complex spatial coupling of different conservation laws as in \cite{CGP07}, or for cases where 
uniqueness fails because of boundary conditions or conditions at $|x|=+\infty$, as it will be 
stressed in the sequel. 

Let us now state the required assumptions on the data.
%
%
Let $\O$ be an open subset of $\R^d$ ($d\ge 1$), and let $T$ be a positive real value or $+\infty$.
\be
F  \textrm{ is a continuous function}, \label{F_hyp}\tag{H1}
\ee
\be
\phi  \textrm{ is a nondecreasing Lipschitz function}, \label{phi_hyp}\tag{H2}
\ee
\be
u_0 \in L^1_{\rm loc}(\O). \label{u0_hyp}\tag{H3}
\ee 
One has to make the following assumption on the source term:
\be
b\in L^2_{\rm loc}([0,T);H^{-1}(\O))\cap L^1_{\rm loc}(\O\times[0,T)). \label{b_hyp}\tag{H4}
\ee

In the sequel, $v\top w$ (resp. $v\bot w$) denotes $\max(v,w)$ (resp. $\min(v,w)$), and $\sign$ is the function 
defined by $$\sign(s)=\left\{\begin{array}{lcl}
0 &\textrm{if}&s=0,\\
1 &\textrm{if}&s>0,\\
-1&\textrm{if}&s<0.  
\end{array} \right.$$

We consider entropy weak solutions of \eqref{eq_1}, as
 in the famous work of Kru{\v{z}}kov \cite{K70} for hyperbolic equations.
This notion can be extended to degenerated parabolic equations, as noticed by Carrillo~\cite{Car99}.
This leads to the following definition of entropy weak solution:
\begin{Def}\label{entro_def}
A function $u$ is said to be an entropy weak solution if:
\begin{enumerate}
\item $u\in L^1_{\rm loc}(\O\times[0,T))$,
\item $F(u)\in \left(L^2_{\rm loc}(\O\times[0,T))\right)^d$,
\item $ \phi(u) \in L^2_{\rm loc}([0,T);H^1_{\rm loc}(\O))$,
\item $\forall \psi\in \Dd^+(\O\times[0,T))$, $\forall\k\in\R$, 
\beqn
&\ds \int_0^T \int_\O |u-\k|\partial_t\psi dxdt +\int_\O |u_0-\k|\psi(0)dx &\nn \\
&\ds + \int_0^T \int_\O \left(F(u\top \k)-F(u\bot \k)- \grad |\phi(u)-\phi(\k)|\right)\cdot\grad\psi dxdt&\nn\\
&\ds +\int_0^T\int_\O \textrm{sign}(u-\k) b\psi dxdt\ge 0.&\label{entro_for}
\eeqn
\end{enumerate}
\end{Def}

\begin{prop}\label{entro_weak}
Any entropy weak solution is a weak solution, that is it fulfills the three first points in 
definition~\ref{entro_def}, and: 
$\forall \psi\in \Dd(\O\times[0,T))$,
\beqn
&\ds \int_0^T \int_\O u\partial_t\psi dxdt +\int_\O u_0\psi(0)dx &\nn \\
&\ds + \int_0^T \int_\O \left(F(u)- \grad \phi(u)\right)\cdot\grad\psi dxdt
\ds +\int_0^T\int_\O  b\psi dxdt= 0.&\label{weak_for}
\eeqn
Reciprocally, if $\phi^{-1}$ is a continuous function, the any weak solution is an entropy solution.
\end{prop}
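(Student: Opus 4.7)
The plan is to derive each implication by a limiting argument: sending $|\k|\to\infty$ in \eqref{entro_for} for the forward direction, and regularizing the sign function in \eqref{weak_for} for the converse, following Carrillo \cite{Car99}.

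\textbf{Entropy $\Rightarrow$ weak.} The integrability requirements 1--3 of Definition~\ref{entro_def} are built into that same definition, so only \eqref{weak_for} is at stake. Fix $\psi\in\Dd^+(\O\times[0,T))$ and use the a.e.\ identities
\[
|u-\k|=\k-u+2(u-\k)^+,\qquad \sign(u-\k)=-1+2\chi_{\{u>\k\}},
\]
\[
F(u\top\k)-F(u\bot\k)=F(\k)-F(u)+2\chi_{\{u>\k\}}(F(u)-F(\k)),
\]
\[
\grad|\phi(u)-\phi(\k)|=(-1+2\chi_{\{u>\k\}})\grad\phi(u),
\]
(and their analogues for $u_0$) to recast \eqref{entro_for} as $-\mathcal{W}(\psi)+2\,\mathcal{R}(\k,\psi)\ge 0$, where $\mathcal{W}(\psi)$ is the left-hand side of \eqref{weak_for} and $\mathcal{R}(\k,\psi)$ is a remainder supported on $\{u>\k\}\cup\{u_0>\k\}$. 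The constant-in-$x$ pieces coming from $\k-u$, $\k-u_0$ and $F(\k)$ drop out by the compact support of $\psi$, since $\k\int\partial_t\psi+\k\int\psi(0)\,dx=0$ and $F(\k)\cdot\int\grad\psi=0$. As $\k\to+\infty$, the measure of $\{u>\k\}\cap\supp\psi$ vanishes (because $u\in L^1_{\rm loc}$) and each integrand in $\mathcal{R}(\k,\psi)$ is dominated by an $L^1$ function thanks to $F(u),\grad\phi(u)\in L^2_{\rm loc}$ and $b\in L^1_{\rm loc}$; dominated convergence then yields $\mathcal{R}(\k,\psi)\to 0$, hence $\mathcal{W}(\psi)\le 0$. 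The symmetric limit $\k\to-\infty$ gives the reverse inequality, and linearity extends the conclusion to arbitrary $\psi\in\Dd(\O\times[0,T))$.

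\textbf{Weak $\Rightarrow$ entropy.} If $\phi^{-1}$ is continuous, then $\phi$ is a homeomorphism, so $\{u>\k\}=\{\phi(u)>\phi(\k)\}$ a.e. Let $H_\eta$ be a smooth nondecreasing approximation of the Heaviside function. After a Steklov-type mollification in time that renders $\phi(u)$ admissible as a test function (allowed by $\phi(u)\in L^2_{\rm loc}([0,T);H^1_{\rm loc}(\O))$), test \eqref{weak_for} against $\psi\,H_\eta(\phi(u)-\phi(\k))$; the chain rule $\grad H_\eta(\phi(u)-\phi(\k))=H_\eta'(\phi(u)-\phi(\k))\grad\phi(u)$ converts the diffusive term into $-\grad(\phi(u)-\phi(\k))^+\cdot\grad\psi$ as $\eta\to 0$, while the convective and source contributions converge to the corresponding pieces of the semi-Kruzhkov inequality for $(u-\k)^+$. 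Adding the analogous inequality obtained for $(\k-u)^+$ produces \eqref{entro_for}. The technical heart of the forward direction is the $F(\k)$ contribution to $\mathcal{R}(\k,\psi)$, controlled using $\k\,|\{u>\k\}\cap\supp\psi|\le\|u\chi_{\{u>\k\}}\|_{L^1(\supp\psi)}\to 0$ together with the $L^2_{\rm loc}$-integrability of $F(u)$; the main obstacle in the converse is the chain-rule passage to the limit, which is precisely where the continuity of $\phi^{-1}$ intervenes, ensuring that $\grad\phi(u)=0$ a.e.\ on $\{u=\k\}$ and that no spurious singular mass appears.
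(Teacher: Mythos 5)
Your argument is essentially the paper's: for the implication ``entropy $\Rightarrow$ weak'' you peel off the affine-in-$u$ part of $|u-\k|$ and send $\k\to\pm\infty$ with dominated convergence, which is exactly what the paper does by adding $\k\int\partial_t\psi+\k\int\psi(0)\,dx=0$ to \eqref{entro_for}; for the converse you sketch the Carrillo regularization that the paper simply delegates to \cite{Car99,GMT94}. So the structure is sound and matches the source.

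One step deserves a flag: your treatment of the $F(\k)$ contribution. The inequality $\k\,{\rm meas}(\{u>\k\}\cap\supp\psi)\le\|u\chi_{\{u>\k\}}\|_{L^1(\supp\psi)}\to0$ controls $\k$ times the measure of the exceptional set, not $F(\k)$ times it, and \eqref{F_hyp} only assumes $F$ continuous, so $|F(\k)|/|\k|$ may be unbounded; the $L^2_{\rm loc}$ bound on $F(u)$ does not rescue this either, since $F$ need not be monotone and one cannot infer ${\rm meas}(\{u>\k\})\le\|F(u)\|_{L^2}^2/|F(\k)|^2$. To be fair, the paper commits exactly the same elision: in \eqref{entro_for_k-} it silently replaces $F(u\top\k)-F(u\bot\k)=\sign(u-\k)\left(F(u)-F(\k)\right)$ by $\sign(u-\k)F(u)$, discarding the term $\int\!\!\int\sign(u-\k)F(\k)\cdot\grad\psi$, which vanishes in the limit only if $|F(\k)|\,{\rm meas}(\{\sign(u-\k)\neq\mp1\}\cap\supp\psi)\to0$. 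So this is a shared gap rather than one you introduced, but since you explicitly claim to have controlled it, you should either add a growth hypothesis on $F$ (e.g.\ sublinear, or locally Lipschitz together with $u\in L^\infty_{\rm loc}$, in which case the whole issue disappears by taking $\k=\pm\|u\|_{L^\infty(\supp\psi)}$ as the paper notes) or give a genuine argument for why $F(\k)\,{\rm meas}(\{u>\k\}\cap\supp\psi)\to0$.
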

\begin{proof}
Suppose first that $\phi^{-1}$ is a continuous function, then the fact that any weak solution $u$ is 
an entropy weak solution is just based on a convexity inequality, and on the fact that 
$\sign(\phi(a)-\phi(b))=\sign(a-b)$ for all $(a,b)\in \R^2$. More details are available in \cite{Car99}
(see also \cite{GMT94}).\\
The fact that an entropy weak solution $u$ is a weak solution is obvious if $u$ belongs to 
$L^\infty_{\rm loc}( \O\times[0,T) )$ (consider $\k=\pm \| u \|_{L^\infty(\supp(\psi))}$).\\
Suppose now that $u$ only belongs to $L^1_{\rm loc}(\O\times[0,T))$. Let $\k\in\R$, then for all 
$\psi\in \Dd(\O\times[0,T))$, one has
\beqn
&\ds \int_0^T\int_\O \k\partial_t\psi dxdt + \int_\O \k \psi(0)dx=0, \label{k_stat}&
\eeqn
which added to \eqref{entro_for} yields: $\forall \psi\in \Dd^+(\O\times[0,T))$,
\beqn
&\ds \int_0^T \int_\O \left(|u-\k|+\k\right)\partial_t\psi dxdt +\int_\O \left(|u_0-\k|+\k\right)\psi(0)dx &\nn \\
&\ds + \int_0^T \int_\O \sign(u-\k)\left(F(u) -\grad \phi(u)\right)\cdot\grad\psi dxdt&\nn\\
&\ds +\int_0^T\int_\O \textrm{sign}(u-\k) b\psi dxdt\ge 0.&\label{entro_for_k-}
\eeqn
One will now let $\k$ tend to $-\infty$ in \eqref{entro_for_k-}. Suppose that $\k<0$, then
$$
\big||u-\k|+\k \big| \le |u| \textrm{ and } \big||u-\k|+\k \big|\rightarrow u \textrm{ a.e. in } \supp(\psi),
$$
and the dominated convergence theorem gives: $\forall \psi\in \Dd^+(\O\times[0,T))$,
\beqn
&\ds \int_0^T \int_\O u\partial_t\psi dxdt +\int_\O u_0\psi(0)dx &\nn \\
&\ds + \int_0^T \int_\O \left(F(u)- \grad \phi(u)\right)\cdot\grad\psi dxdt
\ds +\int_0^T\int_\O  b\psi dxdt\ge  0.&\nn\label{weak_for_+}
\eeqn
The same way, one has: $\forall \psi\in \Dd^+(\O\times[0,T))$,
\beqn
&\ds \int_0^T \int_\O \left(|u-\k|-\k\right)\partial_t\psi dxdt +\int_\O \left(|u_0-\k|-\k\right)\psi(0)dx &\nn \\
&\ds + \int_0^T \int_\O \sign(u-\k)\left(F(u) -\grad \phi(u)\right)\cdot\grad\psi dxdt&\nn\\
&\ds +\int_0^T\int_\O \textrm{sign}(u-\k) b\psi dxdt\ge 0.&\nn\label{entro_for_k+}
\eeqn
Letting $\k$ tend to $+\infty$, one gets: $\forall \psi\in \Dd^+(\O\times[0,T))$,
\beqn
&\ds \int_0^T \int_\O u\partial_t\psi dxdt +\int_\O u_0\psi(0)dx &\nn \\
&\ds + \int_0^T \int_\O \left(F(u)- \grad \phi(u)\right)\cdot\grad\psi dxdt
\ds +\int_0^T\int_\O  b\psi dxdt\le  0.&\nn
\eeqn
This insures that: $\forall \psi\in \Dd^+(\O\times[0,T))$,
\beqn
&\ds \int_0^T \int_\O u\partial_t\psi dxdt +\int_\O u_0\psi(0)dx &\nn \\
&\ds + \int_0^T \int_\O \left(F(u)- \grad \phi(u)\right)\cdot\grad\psi dxdt
\ds +\int_0^T\int_\O  b\psi dxdt =  0.&\label{weak_for_1}
\eeqn
It is now easy to check that \eqref{weak_for_1} still holds for $\psi\in  \Dd(\O\times[0,T))$, and so this 
achieves the proof of propostion~\ref{entro_weak}
\end{proof}
\begin{rmk}\label{F_L1}
In the case where $\phi\equiv 0$, the point 2 of definition \ref{entro_def} can be replaced 
by 
$$
F(u)\in \left(L^1_{\rm loc}(\O\times[0,T))\right)^d,
$$
and one can remove the assumption $b\in L^2_{\rm loc}([0,T);H^{-1}(\O))$ in \eqref{b_hyp}.
Actually, in such a case, Kru\v{z}kov entropies $|\cdot - \k|$ are sufficient to obtain the time continuity. 
The assumptions  $F(u)\in \left(L^2_{\rm loc}(\O\times[0,T))\right)^d$ and
 $b\in L^2_{\rm loc}([0,T);H^{-1}(\O))$
will only be useful to insure $\partial_t u$ belongs to $L^2_{\rm loc}([0,T);H^{-1}(\O))$ in order to
recover the regular convex entropies, which are necessary to treat the parabolic case, as it 
was shown in the work of Carrillo~\cite{Car99}. 
\end{rmk}

The definition~\ref{entro_def} does not take into account any boundary condition, or condition at 
$|x| \rightarrow +\infty$. This lack of regularity can lead to non-uniqueness cases, as the one shown in the 
book of Friedman~\cite{F64} (also available in the one of Smoller~\cite{Smo94}): the very simple problem 
\be\label{SMO}
\left\{\begin{array}{ll}
\partial_t u - \partial^2_{xx} u =0 &\textrm{ in } \R\times \R_+,\\
u(\cdot, 0)=0 &\textrm{ in } \R
\end{array}\right.
\ee
admits multiple classical solutions if one does not ask some condition for large $x$ like e.g. 
$u\in \Ss'(\R\times\R_+)$. Indeed, it is easy to check that
$$
u(x,t)=\sum_{k=0}^{\infty} \frac{1}{2k !}x^{2k}\frac{d^k}{dt^k}e^{-1/t^2}
$$
is a classical solution of \eqref{SMO}. So $u$ is a weak solution of \eqref{SMO}, 
and thus an entropy weak solution thanks to proposition~\ref{entro_weak}.  
It also belongs to $C([0,T],L^1_{\rm loc}(\R))$, thanks to its regularity. 

Let us give another example, proposed by Michel Pierre~\cite{Exemple_Michel}. We now consider the problem
\be\label{MP}
\left\{\begin{array}{ll}
\partial_t u - \partial^2_{xx} u =0 &\textrm{ in } [0,1]\times \R_+,\\
u(\cdot, 0)=0 &\textrm{ in } [0,1],\\
u(0,\cdot) = u(1,\cdot) = 0 & \textrm{ in } \R_+
\end{array}\right.
\ee
which admits the constant function equal to $0$ as unique smooth solution. A non-smooth solution to the problem~\eqref{MP} can be built as follows.
Denote by $u_f$ the fundamental solution of the heat equation in the one-dimensional case:
$$
u_f(x,t) = \frac{1}{\sqrt{4\pi t}} \exp\left( -\frac{x^2}{4t} \right), 
$$
then $v:=\partial_x u_f$ also satisfies the heat equation in the distributional sense. The function $v$, given by 
$$
v(x,t) = -\frac{2x}{t \sqrt{4\pi t}} \exp\left( -\frac{x^2}{4t} \right),
$$
satisfies $v(0,t) = 0$ for all $t > 0$, 
belongs to ${C}^\infty\left( [0,1]\times[0,T]\setminus\{(0,0)\}  \right)$ but is not continuous in $(x,t) = (0,0)$. Indeed, one has
$$
\lim_{s\to 0^+} v(\sqrt s, s) = -\infty.
$$
The function $t\mapsto  v(1,t)$ belongs to $C^\infty(\R_+)$, then there exists a unique $w\in {C}^\infty([0,1]\times\R_+)$ solution to the problem
$$
\left\{\begin{array}{ll}
\partial_t w - \partial^2_{xx} w =0 &\textrm{ in } [0,1]\times \R_+,\\
w(\cdot, 0)=0 &\textrm{ in } [0,1],\\
w(0,\cdot)  = 0 & \textrm{ in } \R_+,\\
w(1,t) = v(1,t) & \textrm{ in } \R_+.\\
\end{array}\right.
$$
Defining $u:=v-w$, then $u$ is a solution to the problem~\eqref{MP} which is not the trivial solution since it is not regular. Nevertheless, $u$ is a weak solution to the problem and thus a entropy weak solution thanks to proposition~\ref{entro_weak}. Thanks to its regularity, it clearly appears that $u$ belongs 
to $C(\R_+;L^1_{\rm loc}((0,1))$.

In the following theorem, we claim that any entropy solution is time continuous
with respect with the time variable, 
at least locally with respect to the space variable.
\begin{thm}\label{main}
Let $u$ be a entropy solution in the sense of definition~\ref{entro_def}, then there exists 
$\o{u}$ such that $u=\o{u}$ a.e. on $\O\times[0,T)$ and fulfilling
$$\o{u}\in C([0,T);L^1_{\rm loc}(\O)).$$
Furthermore, if there exists $p>1$ and a neighborhood $\Uu$ of $\partial\O$ in $\O$ such that
$$u_0\in L^p_{\rm loc}(\Uu), \qquad u\in L^\infty_{\rm loc}([0,T);L^p_{\rm loc}(\Uu)),$$
then we have:
$$\o{u}\in C([0,T);L^1_{\rm loc}(\o\O)).$$
\end{thm}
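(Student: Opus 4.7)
The plan is to combine two ingredients: a weak time regularity extracted directly from the equation, and a self-doubling of variables on the entropy inequality, which together will collapse into strong $L^1_{\rm loc}$-time-continuity.

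I would first invoke Proposition~\ref{entro_weak} together with the standing regularity ($F(u), \nabla\phi(u) \in L^2_{\rm loc}$ and $b \in L^2_{\rm loc}([0,T); H^{-1}(\O))$) to see that $\partial_t u \in L^2_{\rm loc}([0,T); H^{-1}_{\rm loc}(\O))$. For every $\chi \in \Dd(\O)$, the scalar $t \mapsto \int_\O \chi\,u(\cdot,t)\,dx$ then lies in $H^1_{\rm loc}([0,T))$, hence is continuous up to modification on a null set. Running this through a countable dense family of test functions pins down a canonical representative $\o{u}$ of $u$ which is weakly continuous in time, and which will be the backbone onto which the strong continuity is grafted. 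Next, specializing the entropy inequality~\eqref{entro_for} to $\psi(x,t) = \chi(x)\theta_\varepsilon(t)$ with $\theta_\varepsilon \to \mathbf 1_{[t_1, t_2]}$, and passing to the limit at Lebesgue times, yields for $\chi \in \Dd^+(\O)$ and $\k \in \R$,
\[
\int_\O \chi\,|\o{u}(t_2) - \k|\,dx - \int_\O \chi\,|\o{u}(t_1) - \k|\,dx \le r(t_1, t_2;\k,\chi),
\]
where $r$ collects the flux and source terms integrated over $[t_1, t_2]$ against $\nabla\chi$ and $\chi$, and tends to zero as $|t_2 - t_1| \to 0$ by Cauchy--Schwarz and the local $L^2$-regularity, uniformly for $\k$ in bounded sets.

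This one-sided Kru\v{z}kov bound is the workhorse, but it must be sharpened by allowing $\k$ to depend on $x$; I would take $\k = \o{u}(\cdot, s_0)$ for a.e.\ $s_0 \in (0, T)$, via self-doubling of variables. Concretely, apply the entropy inequality for each fixed $y$ with the constant level $\k = \o{u}(y, s_0)$, multiply by a spatial mollifier $\rho_\delta(x - y)$, integrate in $y$, and let $\delta \to 0$. For a.e.\ $s_0$ one has $\phi(\o{u}(\cdot, s_0)) \in H^1_{\rm loc}(\O)$, which is what lets the parabolic entropy flux $\nabla|\phi(\o{u}) - \phi(\o{u}(\cdot, s_0))|$ make sense in the limit; the passage itself is justified via Carrillo's upgrade of Kru\v{z}kov entropies to smooth convex entropies, for which the regularity $\partial_t u \in L^2_{\rm loc}(H^{-1})$ is indispensable (cf.\ Remark~\ref{F_L1}). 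Specializing $s_0 = t_1$ makes the $t_1$-integral vanish identically and leaves
\[
\int_\O \chi\,|\o{u}(t_2) - \o{u}(t_1)|\,dx \le r'(t_1, t_2;\chi) \to 0 \quad \text{as } |t_2 - t_1| \to 0,
\]
which is strong $L^1_{\rm loc}(\O)$-continuity at a.e.\ $t_1$; extension to every $t \in [0,T)$ follows by density combined with the uniformity of $r'$ and the weak continuity established in the first paragraph.

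For the up-to-boundary refinement, the same argument is run with $\chi \in \Dd^+(\o\O)$ touching $\partial\O$, using an auxiliary cutoff $\eta_\varepsilon$ vanishing near $\partial\O$; the extra assumption $u_0 \in L^p_{\rm loc}(\Uu)$ and $u \in L^\infty_{\rm loc}([0,T); L^p_{\rm loc}(\Uu))$ with $p > 1$ furnishes the equi-integrability needed to kill the boundary-layer contribution as $\varepsilon \to 0$. The main obstacle is clearly the self-doubling: rigorously carrying out the mollified passage to the limit with $\k = \o{u}(\cdot, s_0)$ demands tight control of the nonlinear parabolic flux $\nabla|\phi(\o{u}) - \phi(\o{u}(\cdot, s_0))|$ uniformly in $\delta$, which is exactly where Carrillo's $L^2(H^{-1})$-enabled upgrade of the admissible entropy class does the heavy lifting.
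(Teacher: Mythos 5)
Your overall strategy (doubling of variables on the entropy inequality, Carrillo's upgrade to regular convex entropies via $\partial_t u\in L^2_{\rm loc}(H^{-1})$, equi-integrability near $\partial\O$ for the up-to-boundary statement) matches the paper in spirit, and your treatment of the initial time and of the boundary refinement is essentially the paper's. The gap is in the central step. Self-doubling with $\k=\o u(y,t_1)$ against a mollifier $\rho_\delta(x-y)$ produces, besides the flux and source remainders you call $r'$, the mollification error
$$2\int_\O\int_\O|\o u(x,t_1)-\o u(y,t_1)|\,\chi(x)\rho_\delta(x-y)\,dx\,dy,$$
exactly as in the proof of Proposition~\ref{t0_prop} (see \eqref{t0_3}, with $u_0$ in place of $\o u(\cdot,t_1)$). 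At $t_1=0$ this error is a fixed quantity, independent of the running time, which vanishes with $\delta$ because $u_0\in L^1_{\rm loc}(\O)$; one may therefore let $t_2\to t_1$ first at fixed $\delta$ and send $\delta\to0$ afterwards. For $t_1$ ranging over the Lebesgue set $\Ll$, however, the rate at which this error vanishes depends on the $L^1$-translation modulus of $\o u(\cdot,t_1)$, which is not uniformly controlled in $t_1$ --- controlling it uniformly is essentially the compactness you are trying to prove. So your argument gives continuity of $\o u$ at each point of $\Ll$ \emph{as a map restricted to $\Ll$}, but not the uniform continuity on $\Ll\cap[0,T-\gamma]$ needed to conclude $\Ll=[0,T)$ and to extend $\o u$ across the exceptional null set; pointwise continuity on a dense full-measure set does not extend (consider $\sin(1/(t-t_0))$ away from $t_0$). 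The weak continuity you extract from $\partial_t u\in L^2_{\rm loc}(H^{-1})$ identifies the candidate limit at an exceptional time but does not, as stated, upgrade weak to $L^1_{\rm loc}$ convergence there.

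The paper sidesteps the self-doubling against a fixed time slice: it applies Carrillo's Kato inequality (Theorem~\ref{Car_thm}) to the \emph{pair of entropy solutions} $u(\cdot,\cdot)$ and $u(\cdot,\cdot+h)$, obtaining
$$\int_\O|\o u(x,t^\star)-\o u(x,t^\star+h)|\,\zeta(x)\,dx\le\int_\O|u_0(x)-\o u(x,h)|\,\zeta(x)\,dx+\omega(h),$$
where $\omega(h)$ is an integral over the whole time interval of differences of time-translates of fixed $L^1_{\rm loc}$ functions (recall $|F(a\top b)-F(a\bot b)|=|F(a)-F(b)|$), hence independent of $t^\star$ and vanishing as $h\to0$. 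All the $t^\star$-dependence is concentrated in the first right-hand term, which is exactly the initial-time statement of Proposition~\ref{t0_prop}. If you wish to keep your route, you must supply the missing uniformity, and the natural way to do so is the full Kru\v{z}kov--Carrillo doubling in space and time between $u$ and its time translate --- which is precisely what Theorem~\ref{Car_thm} packages.
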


\section{Essential continuity for $t=0$}
In this section, we give a simple way to prove the classical result stated in proposition~\ref{t0_prop}. 
\begin{Def}\label{Lebesgue_def}
One says that $t\in [0,T)$ is a right-Lebesgue point if there exists $\o u(t)$ in $L^1_{\rm loc}(\O)$ such that 
for all compact subset $K$ of $\O$, 
$$
 \lim_{\eps\rightarrow 0}\frac{1}{\eps}\int_t^{t+\eps} \|u(s)-\o{u}(t)\|_{L^1(K)}ds = 0.
$$
We denote by $\Ll$ the set of right-Lebesgue points. 
\end{Def}
It is well known that $meas\left((0,T)\setminus \Ll\right)=0$ and that $u=\o u$ (in the 
$L^1_{\rm loc}(\O)$-sense) a.e. in $(0,T)$.
In the sequel, we will prove that $\Ll=[0,T)$, and that $\o{u}$ belongs to $C([0,T);L^1_{\rm loc}(\O))$. 
We begin by considering the essential continuity for the initial time $t=0$.
\begin{prop}\label{t0_prop}
For all $\zeta\in \Dd^+(\O)$, one has:
$$
\lim_{\begin{array}{c} t\rightarrow 0\\ t\in \Ll\end{array}}
\int_\O |\o u(x,t)-u_0(x)|\zeta(x)dx = 0.
$$
Particularly, this ensures that $0\in \Ll$.
\end{prop}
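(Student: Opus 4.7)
The plan is to apply the entropy inequality \eqref{entro_for} to a tensor test function and then to localise the Kru\v{z}kov constant $\k$ via a smooth partition of unity, so as to mimic a piecewise-constant approximation of $u_0$.

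Concretely, for fixed $\zeta\in\Dd^+(\O)$, $t_0\in\Ll$ and $\k\in\R$, I would plug into \eqref{entro_for} the test function $\psi(x,t)=\zeta(x)\theta_{t_0,\eps}(t)$, where $\theta_{t_0,\eps}$ is (a smooth approximation of) the Lipschitz cutoff equal to $1$ on $[0,t_0]$ and linearly decreasing to $0$ on $[t_0,t_0+\eps]$. Only the time-derivative term carries the factor $\eps^{-1}$, namely $-\eps^{-1}\int_{t_0}^{t_0+\eps}\int_\O |u-\k|\zeta\,dx\,dt$; the inequality $\big||u(s,x)-\k|-|\o u(t_0,x)-\k|\big|\le|u(s,x)-\o u(t_0,x)|$ together with the defining property of $\Ll$ shows that this quantity converges to $-\int_\O |\o u(t_0,x)-\k|\zeta(x)\,dx$ as $\eps\to 0^+$. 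The flux, diffusion and source integrals over $[0,t_0+\eps]$ converge, by dominated convergence, to their counterparts on $[0,t_0]$, yielding
$$\int_\O |\o u(t_0)-\k|\zeta\,dx \;\le\; \int_\O |u_0-\k|\zeta\,dx + R(t_0,\k,\zeta),\qquad(\star)$$
where $R(t_0,\k,\zeta)$ gathers those three integrals. For fixed $\k$ and $\zeta$, $R(t_0,\k,\zeta)\to 0$ as $t_0\to 0^+$: its three integrands are dominated respectively by $(|F(u)|+|F(\k)|)|\grad\zeta|$, $|\grad\phi(u)||\grad\zeta|$ and $|b|\zeta$, all in $L^1_{\rm loc}(\O\times[0,T))$ by \eqref{F_hyp}, Definition~\ref{entro_def} and \eqref{b_hyp}.

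To turn $(\star)$ into a bound on $\int_\O|\o u(t_0)-u_0|\zeta\,dx$ I would localise $\k$. Given $\eta>0$, pick a continuous $v$ on a neighbourhood of $K:=\supp(\zeta)$ with $\int_K|u_0-v|\,dx<\eta/(2\|\zeta\|_\infty)$; by uniform continuity of $v$, cover $K$ by finitely many balls $B(x_j,\delta)$ on each of which the oscillation of $v$ is small enough; let $\{\alpha_j\}_{j=1}^N$ be a smooth partition of unity on $K$ subordinate to this cover, and set $\zeta_j:=\zeta\alpha_j\in\Dd^+(\O)$, $\k_j:=v(x_j)$. Then $\sum_j\zeta_j=\zeta$ on $K$ and $\sum_j\int_\O|u_0-\k_j|\zeta_j\,dx<\eta$. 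Summing $(\star)$ at each $(\k_j,\zeta_j)$ and using the triangle bound $|\o u(t_0)-u_0|\zeta\le\sum_j(|\o u(t_0)-\k_j|+|u_0-\k_j|)\zeta_j$ gives
$$\int_\O |\o u(t_0)-u_0|\zeta\,dx \;\le\; 2\eta + \sum_{j=1}^N R(t_0,\k_j,\zeta_j).$$
Sending $t_0\to 0^+$ through $\Ll$ with the (finite) partition fixed yields $\limsup\le 2\eta$; $\eta$ being arbitrary concludes. The ``particularly'' statement follows by combining this with $u=\o u$ a.e.\ in time: choosing $\zeta\equiv 1$ on a compact $K$ shows that $\|\o u(s)-u_0\|_{L^1(K)}\to 0$ as $s\to 0^+$ in $\Ll$, whence $\eps^{-1}\int_0^\eps \|u(s)-u_0\|_{L^1(K)}\,ds\to 0$, so setting $\o u(0):=u_0$ verifies Definition~\ref{Lebesgue_def}.

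The main obstacle is the passage $\eps\to 0^+$ in the time-derivative contribution in $(\star)$ at a right-Lebesgue point, which is the one place where the definition of $\Ll$ is crucially used; the remaining limits are routine dominated convergence once local integrability of each flux-type integrand has been verified.
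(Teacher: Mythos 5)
Your proof is correct, and its skeleton coincides with the paper's: plug a time cutoff $\theta_{t_0,\eps}$ (the paper's $\chi^\a_{[0,t^\star[}$) into \eqref{entro_for}, use the right-Lebesgue-point property of $t_0\in\Ll$ to pass to the limit $\eps\to 0$ in the $\partial_t\psi$ term, dominate the flux, diffusion and source terms by fixed $L^1_{\rm loc}$ functions integrated over $[0,t_0]$ so that they vanish as $t_0\to 0^+$, and only then remove the localisation of $\k$. Where you genuinely diverge is in how the non-constant initial datum is reconciled with the constant Kru\v{z}kov parameter: the paper takes $\k=u_0(y)$, tests with $\zeta(x)\rho_\eps(x-y)$ and integrates in $y$ (a half-doubling of variables), so that the residual error is $2\int\int|u_0(x)-u_0(y)|\zeta(x)\rho_\eps(x-y)\,dx\,dy$, killed by continuity of translations in $L^1_{\rm loc}$; you instead approximate $u_0$ in $L^1(\supp\zeta)$ by a continuous $v$, freeze $v$ at finitely many values $\k_j$ via a partition of unity, and sum the resulting inequalities, the residual error being the $2\eta$ from the $L^1$-approximation plus the oscillation of $v$. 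The two devices are interchangeable here: yours is slightly more elementary (no mollifier, no need to differentiate $\zeta(x)\rho_\eps(x-y)$ in the flux terms), while the paper's integrates more smoothly with the Kru\v{z}kov/Carrillo doubling machinery used in the rest of the article and avoids introducing the auxiliary continuous approximant. Your treatment of the terminal claim $0\in\Ll$ (set $\o u(0):=u_0$ and use $u=\o u$ for a.e.\ $t$ to convert the limit along $\Ll$ into the averaged limit of Definition~\ref{Lebesgue_def}) is also sound.
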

The limit as $t$ tends to $0$, $t\in \Ll$ can 
be seen as an essential limit, as it is done in lemma~7.41 in the book of M\`alek et al. \cite{MNRR96} in the case 
of a purely hyperbolic problem, or by Otto \cite{Otto96} in the case of a non strongly degenerated parabolic equation. 
See also the paper of Blanchard and Porretta \cite{BP05} for the case of renormalized solutions 
for degenerate parabolic equations.\\
\begin{proof}
First, notice that for all $t\in \Ll$, and for all $\k\in \R$, $t$ is also a right-hand side Lebesgue point of $|u-\k|$.
Indeed, if $K$ denotes a compact subset of $\o\O$, one has for a.e $(x,s)\in \O\cap K\times (0,T)$
$$
\big| |u(x,s)-\k|- |u(x,t)-\k| \big| \le |u(x,s)-u(x,t)|,
$$
and so, for all $t\in \Ll$, 
\be\label{Leb2}
\lim_{\a\rightarrow 0}\frac{1}{\a}\int_t^{t+\a} \int_{\O\cap K} \big| |u(x,s)-\k|- |u(x,t)-\k| \big| dx ds =0.
\ee
Let $\a>0$, and $t^\star\in \Ll$, one denotes 
$$\chi^\a_{[0,t^\star[}(t)=\left\{ \begin{array}{ll} 
1 &\textrm{ if }t\le t^\star\\ 
0 &\textrm{ if }t\ge t^\star+\a \\
\frac{t^\star+\a-t}{\a}&\textrm{ if }t^\star< t <t^\star+\a.
\end{array}\right.$$

Let $\zeta\in \Dd(\O)$, and let $\eps>0$ be such that $d(supp(\zeta),\partial\O)>\eps$. Let $\rho\in \Dd^+(\R^d)$, with 
$supp(\rho)\subset B(0,1)$ and $\int_{\R^d} \rho(z)dz =1$. One denotes 
$\rho_\eps(z)= \frac{1}{\eps^d} \rho(\frac{z}{\eps})$. The function $y\mapsto \zeta(x)\rho_\eps(x-y)$ belongs to $\Dd^+(\O)$.\\
Taking $\k= u_0(y)$ and $\psi(x,y,t)=\zeta(x)\rho_\eps(x-y) \chi^\a_{[0,t^\star[}(t)$ in (\ref{entro_for}), an integrating with respect to $y\in \O$ 
yields:
\beqn
&\ds \int_0^T \int_\O\int_\O |u(x,t)-u_0(y)|\zeta(x)\rho_\eps(x-y)\partial_t  \chi^\a_{[0,t^\star[}(t)dxdydt &\nn\\
&\ds +\int_\O\int_\O  |u_0(x)-u_0(y)|\zeta(x)\rho_\eps(x-y)dxdy &\nn \\
&\ds + \int_0^T  \chi^\a_{[0,t^\star[}(t) \int_\O \int_\O\left[\begin{array}{c}(F(u(x,t)\top u_0(y))-F(u(x,t)\bot u_0(y))) \\
\cdot
\grad \left(\zeta(x)\rho_\eps(x-y)\right)\end{array}\right]dxdydt& \nn\\
&\ds -\int_0^T  \chi^\a_{[0,t^\star[}(t) \int_\O \int_\O \grad |\phi(u(x,t))-\phi(u_0(y))|\cdot\grad \left(\zeta(x)\rho_\eps(x-y)\right) dxdydt&\nn\\
&\ds +\int_0^T \chi^\a_{[0,t^\star[}(t) \int_\O\int_\O  \left[\begin{array}{c}\textrm{sign}(u(x,t)-u_0(y)) b(x,t)\\
\zeta(x)\rho_\eps(x-y) \end{array}\right]dxdydt\ge 0,&\label{t0_1}
\eeqn
where all the gradient are considered with respect to $x$, and not $y$.

One has $$|u(x,t)-u_0(y)| = |u(x,t)-u_0(x)| + |u(x,t)-u_0(y)|-|u(x,t)-u_0(x)|,$$ then, since 
$\int_{\R^d}\rho_\eps(x-y)dy=1$ for all $x$ in $\supp (\zeta)$,  
using  $$|u_0(x)-u_0(y)|\ge \big| |u(x,t)-u_0(y)|-|u(x,t)-u_0(x)|\big|,$$ we obtain
\beqn
&\ds  \int_0^T\partial_t  \chi^\a_{[0,t^\star[}(t) \int_\O\int_\O |u(x,t)-u_0(y)|\zeta(x)\rho_\eps(x-y)dxdydt &\nn\\
&\ds \le  \int_0^T\partial_t  \chi^\a_{[0,t^\star[}(t) \int_\O|u(x,t)-u_0(x)|\zeta(x) dxdt &\nn\\
& \ds + \|\partial_t  \chi^\a_{[0,t^\star[}\|_{L^1(0,T)} \int_\O\int_\O  |u_0(x)-u_0(y)|\zeta(x)\rho_\eps(x-y)dxdy. &\label{t0_2}
\eeqn
For all $\a\in ]0,T-t^\star]$, 
$$
 \|\partial_t  \chi^\a_{[0,t^\star[}\|_{L^1(0,T)}=1,
$$
and then, one can let $\a$ tend to $0$ in (\ref{t0_2}), so that (\ref{t0_1}) implies:
\beqn
&\ds -  \int_\O\int_\O |\o u(x,t^\star)-u_0(x)|\zeta(x)dxdy   &\nn\\
&\ds  + 2   \int_\O\int_\O |u_0(x)-u_0(y)|\zeta(x)\rho_\eps(x-y)dxdy   
\ds + \int_0^{t^\star} \Rr_\eps(t) dt \ge 0,&\label{t0_3}
\eeqn
where $\Rr_\eps$ belongs to $L^1(0,T)$ for all $\eps>0$.  Since $\Ll$ is dense in 
$[0,T]$, one can let in a first step $t^\star$ tend to $0$, so that $\int_0^{t^\star} \Rr_\eps(t)dt$ vanishes:
\beqn
&\ds \limsup_{\begin{array}{c} t^\star \rightarrow 0 \\ t^\star\in \Ll  \end{array}} 
\int_\O\int_\O |\o u(x,t^\star)-u_0(x)|\zeta(x)dxdy  &\nn\\
&\ds \le 2   \int_\O\int_\O |u_0(x)-u_0(y)|\zeta(x)\rho_\eps(x-y)dxdy .&\label{t0_4}
\eeqn
One can now let $\eps$ tend to $0$, and using the fact that $u_0$ belongs to $L^1_{\rm loc}(\O)$, and that $\zeta$ is compactly supported in $\O$, one gets:
$$
 \lim_{\begin{array}{c} t^\star \rightarrow 0 \\ t^\star\in \Ll  \end{array}} 
 \int_\O\int_\O |\o u(x,t^\star)-u_0(x)|\zeta(x)dxdy=0.
$$
 This achieves the proof of proposition~\ref{t0_prop}.
\end{proof}
\section{Time continuity for any $t\ge 0$}
In this section, we want to prove the following proposition:
\begin{prop}\label{tge0_prop}
Let $u$ be a entropy solution in the sense of definition~\ref{entro_def}, then there exists 
$\o{u}$ such that $u=\o{u}$ a.e. on $\O\times(0,T)$ and fulfilling
$$\o{u}\in C([0,T);L^1_{\rm loc}(\O)).$$
\end{prop}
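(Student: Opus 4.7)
The plan is to mimic the proof of Proposition~\ref{t0_prop} using an arbitrary right-Lebesgue point $s\in\Ll$ in place of $0$ and $\o u(\cdot,s)$ in place of $u_0$. Pick $s,t\in\Ll$ with $0<s<t<T$, a nonnegative $\zeta\in\Dd(\O)$ with $d(\supp\zeta,\partial\O)>\eps$, and apply~\eqref{entro_for} with $\k=\o u(y,s)$ and test function
$$\psi(x,y,\tau)=\zeta(x)\rho_\eps(x-y)\chi^\a(\tau),$$
where $\chi^\a$ is the piecewise affine function equal to $1$ on $[s+\a,t]$, to $0$ outside $[s,t+\a]$, and affine on $[s,s+\a]$ and $[t,t+\a]$. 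Integrate the resulting inequality with respect to $y\in\O$. Because $\chi^\a(0)=0$ as soon as $\a<s$, no $\psi(0)$-boundary term survives, in contrast with Proposition~\ref{t0_prop}.

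As $\a\to 0$, the right-Lebesgue property~\eqref{Leb2} applied at both $s$ and $t$ converts the $\partial_\tau\chi^\a$ term into
$$\int_\O\int_\O\bigl(|\o u(x,s)-\o u(y,s)|-|\o u(x,t)-\o u(y,s)|\bigr)\zeta(x)\rho_\eps(x-y)\,dx\,dy,$$
while the flux, diffusion and source contributions collapse to a quantity $\int_s^t\Rr_\eps(\tau)\,d\tau$ with $\Rr_\eps\in L^1_{\rm loc}([0,T))$ depending only on $\eps$, $\zeta$, $F(u)$, $\phi(u)$ and $b$ (the potential $\eps$-blowup arising from $\grad\rho_\eps$). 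A triangle inequality then yields, for every $s<t$ in $\Ll$,
$$\int_\O|\o u(x,t)-\o u(x,s)|\zeta(x)\,dx\le 2\int_\O\int_\O|\o u(x,s)-\o u(y,s)|\zeta(x)\rho_\eps(x-y)\,dx\,dy + \int_s^t\Rr_\eps(\tau)\,d\tau.$$
With $s\in\Ll$ and $\eps>0$ fixed, letting $t\to s^+$ along $\Ll$ kills the time integral, and then $\eps\to 0$ kills the double integral because $\o u(\cdot,s)\in L^1_{\rm loc}(\O)$. This proves that $\o u$ is right-continuous (in $L^1_{\rm loc}(\O)$) at every $s\in\Ll$, and at $s=0$ by Proposition~\ref{t0_prop}.

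The final and most delicate step is to upgrade right-continuity on the dense set $\Ll$ into genuine continuity on $[0,T)$. For any $t^*\in[0,T)$ the plan is to show that $(\o u(s_n))$ is Cauchy in $L^1_{\rm loc}(\O)$ along every sequence $s_n\to t^*$ with $s_n\in\Ll$. For $s_n<s_m$, the inequality above bounds $\int|\o u(\cdot,s_m)-\o u(\cdot,s_n)|\zeta\,dx$ by $2\int\int|\o u(x,s_n)-\o u(y,s_n)|\zeta\rho_\eps\,dx\,dy$ plus $\int_{s_n}^{s_m}\Rr_\eps$; the double integral vanishes as $\eps\to 0$ only for each fixed $s_n$, while the time integral vanishes as $n,m\to\infty$ only for each fixed $\eps$. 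The main obstacle is reconciling these two antagonistic regimes: I plan to introduce an auxiliary reference $s_0\in\Ll$ near $t^*$, tune $\eps$ to the fixed function $\o u(\cdot,s_0)$, and bound all residuals through intervals of the form $[s_0,s_n]$, effectively playing $s_n\to t^*$ off against $\eps\to 0$ in a diagonal fashion. Once Cauchy-ness is secured, completeness of $L^1_{\rm loc}(\O)$ defines $\o u(t^*)$ as the limit and gives $\o u\in C([0,T);L^1_{\rm loc}(\O))$.
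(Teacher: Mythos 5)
Your first step---re-running the proof of Proposition~\ref{t0_prop} with an arbitrary $s\in\Ll$ in place of $0$ and $\o u(\cdot,s)$ in place of $u_0$---is correct and is exactly what the paper does to get right-continuity at each right-Lebesgue point. The genuine gap is in your ``final and most delicate step'', which you only sketch, and which as sketched cannot be completed. Your basic estimate has the form
$$\int_\O|\o u(x,t)-\o u(x,s)|\zeta\,dx\le 2M_\eps(s)+\int_s^t\Rr_\eps(\tau)\,d\tau,\qquad M_\eps(s):=\int_\O\int_\O|\o u(x,s)-\o u(y,s)|\zeta(x)\rho_\eps(x-y)\,dx\,dy,$$
and the two error terms are antagonistic in a way your diagonal scheme does not resolve: $M_\eps(s)\to0$ as $\eps\to0$ only for each \emph{fixed} $s$, with no uniformity in $s$ (a priori $\o u(\cdot,s)$ could oscillate ever more wildly as $s\to t^*$, which is essentially the precompactness you are trying to prove), while $\Rr_\eps$ blows up as $\eps\to0$ because of the $\grad\rho_\eps$ factor. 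If you anchor at a reference $s_0<t^*$ and choose $\eps=\eps(s_0)$ to make $M_\eps(s_0)$ small, you then need $\int_{s_0}^{t^*}\Rr_{\eps(s_0)}$ small, i.e.\ $t^*-s_0$ small depending on $\eps(s_0)$, which itself depends on $s_0$: the choice is circular and nothing guarantees a simultaneous solution. Note also that right-continuity at every point of a dense (even full-measure) set is genuinely weaker than continuity---a sum of indicators of disjoint intervals accumulating at $t^*$ is right-continuous at every point yet has no limit at $t^*$---so some uniform mechanism \emph{must} be extracted from the equation, and your argument contains none.

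The paper obtains that uniformity by a different device: it applies Carrillo's comparison (Kato) inequality, Theorem~\ref{Car_thm}, to the pair consisting of $u$ and its own time translate $u(\cdot,\cdot+h)$. This yields
$$\int_\O |\o u(x,t^\star)-\o u(x,t^\star+h)|\zeta\,dx\le\int_\O|u_0-\o u(\cdot,h)|\zeta\,dx+\omega(h),$$
where $\omega(h)$ is built from the $L^1$ moduli of continuity of the time translates of the \emph{fixed} functions $F(u)$, $\grad\phi(u)$ and $b$, hence is independent of $t^\star$ and tends to $0$ with $h$; the first term tends to $0$ by Proposition~\ref{t0_prop}. This gives \emph{uniform} continuity of $t\mapsto\o u(t)$ on $\Ll\cap[0,T-\gamma]$, which extends automatically to the closure. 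Your mollification-in-space argument produces no analogue of this $t^\star$-independent modulus, so the last step of your proof is missing, not merely unpolished. (A secondary caveat: for $\phi\not\equiv0$ even your first step should acknowledge, as the paper's Remark does, that the doubling argument with Kru\v{z}kov entropies alone is delicate in the parabolic term; the paper sidesteps this by invoking Carrillo's results.)
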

In the sequel, we still denote by $\o u$ the representative defined using the right Lebesgue points 
introduced in definition~\ref{Lebesgue_def}.
Proving the essential continuity for every $t^\star\in \Ll$ is easy.
Indeed, if one replaces $\psi(x,t)$ by $(1-\chi^\a_{[0,t^\star[})(t)\psi(x,t)$ in (\ref{entro_for}), and then if one lets $\a$
tend to $0$, one gets:
\beqn
&\ds \int_{t^\star}^T \int_\O |u-\k|\partial_t\psi dxdt +\int_\O |\o u({t^\star})-\k|\psi({t^\star})dx &\nn \\
&\ds + \int_{t^\star}^T \int_\O \left(F(u\top \k)-F(u\bot \k)- \grad |\phi(u)-\phi(\k)|\right)\cdot\grad\psi dxdt&\nn\\
&\ds +\int_{t^\star}^T\int_\O \textrm{sign}(u-\k) b\psi dxdt\ge 0.&\label{entro_for_tstar}
\eeqn
One can thus apply the proposition~\ref{t0_prop} with $t^\star$ instead of $0$, and $\o u(t^\star)$ instead of $u_0$: $\forall \zeta\in \Dd^+(\O)$,
$$
 \lim_{\begin{array}{c} s^\star \rightarrow t^\star \\ s^\star\in \Ll  \end{array}} \int_\O\int_\O |\o u(x,s^\star)-
 \o u(x,t^\star)|\zeta(x)dxdy=0.
$$
We will prove the uniform continuity of $t\mapsto \o u(t)$ from $\Ll\cap[0,T-\gamma]$ to $L^1_{\rm loc}(\O)$ 
for all $\gamma\in (0,T)$. This will give as a direct consequence that $\Ll=[0,T)$ and 
$\o u\in C([0,T);L^1_{\rm loc}(\O))$.  This uniform continuity will
 come from theorem 13 in the paper of Carrillo \cite{Car99}, which, adapted to our case, can be stated as follow:
\begin{thm}\label{Car_thm}
Suppose that \eqref{F_hyp}, \eqref{phi_hyp} hold.  Let $u_0, v_0$ belong to $L^1_{\rm loc}(\O)$, let $b_u, b_v$ belong to 
$ L^2((0,T);H^{-1}(\O))\cap L^1((0,T);L^1_{\rm loc}(\O))$, 
 and let $u,v$ be two entropy solutions associated to the choice of $b=b_u$ and initial data $u_0$ for 
 $u$ and $b=b_v$ and initial data $v_0$ for $v$ in definition~\ref{entro_def}. 
 Then $\forall\psi\in \Dd^+(\O\times[0,T[)$, 
\beqn
&\ds \int_0^T\int_\O | u - v |\partial_t \psi dxdt + \int_\O |u_0 -v_0| \psi(0) dx&\nn \\
&\ds \int_0^T \int_\O \left(F(u\top v)-F(u\bot v)- \grad |\phi(u)-\phi(v)|\right)\cdot\grad\psi dxdt&\nn\\
&\ds +\int_0^T\int_\O \textrm{sign}(u-v) (b_u-b_v)\psi dxdt\ge 0.&
\label{Car_for}\eeqn
\end{thm}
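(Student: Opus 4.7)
The approach I would take is the Kruzhkov doubling-of-variables technique refined by Carrillo~\cite{Car99} to accommodate the nonlinear (possibly degenerate) diffusion. Let $(x,t)$ denote the variables attached to $u$ and $(y,s)$ those attached to $v$. Fix a standard nonnegative mollifier $\rho_\eps$ concentrated near the origin in $\R^d\times\R$ and a test function $\psi\in\Dd^+(\O\times[0,T))$. The core idea is to apply the entropy inequality for $u$ with $\k=v(y,s)$ held fixed and, symmetrically, the entropy inequality for $v$ with $\k=u(x,t)$, using the doubled test function $\psi\bigl(\tfrac{x+y}{2},\tfrac{t+s}{2}\bigr)\rho_\eps(x-y,t-s)$, then integrating over all remaining variables and adding.

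The first step is to justify replacing the constant $\k$ in the entropy inequality by a function of the second pair of variables. This is carried out by approximating $v$ (and then $u$) by simple functions, applying the entropy inequality for each constant value of the approximation with a suitable decomposition of the test function, and passing to the limit using that $v\in L^1_{\rm loc}$ and that $F$, $\phi$, and $\sign$ interact well with such approximations. After adding the two doubled inequalities, the initial data terms combine into an integral of the form $\int_\O\int_\O|u_0(x)-v_0(y)|\psi(0,\cdot)\rho_\eps(x-y,\cdot)\,dxdy$, which tends to $\int_\O|u_0-v_0|\psi(0)\,dx$ as $\eps\to 0$. The essential continuity of $u$ and $v$ at $t=0$, borrowed from Proposition~\ref{t0_prop}, is what ensures the initial data contribution makes sense.

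The hard part is handling the diffusion term. After summing, the pieces involving $\nabla_x\rho_\eps+\nabla_y\rho_\eps$ cancel exactly because $\rho_\eps$ depends only on $x-y$, so the surviving diffusion contribution takes the form $-\int\nabla|\phi(u)-\phi(v)|\cdot\nabla\psi\,\rho_\eps$. Carrillo's decisive observation is that, since $\phi$ is nondecreasing, $\sign(u-v)=\sign(\phi(u)-\phi(v))$ whenever $\phi(u)\ne\phi(v)$, while $\nabla|\phi(u)-\phi(v)|=0$ a.e.\ on $\{\phi(u)=\phi(v)\}$; splitting the integral along these two sets is what allows the asymmetric contributions from the two entropy inequalities to combine into a single well-defined term. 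The convective flux is easier: the Kruzhkov flux $F(u\top v)-F(u\bot v)$ appears naturally from the entropy formulation and passes to the limit using $F(u),F(v)\in L^2_{\rm loc}$.

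Finally, I would send $\eps\to 0$. The convective and initial data terms pass by the Lebesgue differentiation theorem and the $L^1_{\rm loc}$ regularity of $u$ and $v$. The source contribution $\sign(u-v)(b_u-b_v)\psi$ arises from combining the sign terms of the two entropy inequalities; justifying it requires pairing $b_u-b_v\in L^2(H^{-1})$ against $\phi(u)-\phi(v)\in L^2(H^1_{\rm loc})$, which is exactly why the stronger hypothesis $b\in L^2_{\rm loc}(H^{-1})$ appears in \eqref{b_hyp}, as already noted in Remark~\ref{F_L1}. The principal technical subtlety is the mixed diffusion term; the remaining manipulations are routine but require careful accounting of the mollifier parameters.
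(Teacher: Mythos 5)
First, note that the paper does not prove this statement at all: it is quoted as Theorem~13 of Carrillo \cite{Car99} and used as a black box, so your sketch is really being compared with Carrillo's original argument rather than with anything in the text. Your overall strategy (Kru\v{z}kov doubling of variables, the monotonicity of $\phi$, the limit $\eps\to0$) is the correct one, and your treatment of the convective flux, the initial data and the source terms is right in outline.

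The genuine gap is in the diffusion term, which is precisely the hard part. The exact cancellation you invoke --- ``the pieces involving $\grad_x\rho_\eps+\grad_y\rho_\eps$ cancel because $\rho_\eps$ depends only on $x-y$'' --- is valid for the convective term, where the Kru\v{z}kov flux $F(u\top v)-F(u\bot v)$ is a single symmetric function of the pair $(u(x,t),v(y,s))$ multiplying $\grad_x(\psi\rho_\eps)$ in one inequality and $\grad_y(\psi\rho_\eps)$ in the other. It fails for the diffusion term: there the two inequalities contribute $\sign(\phi(u)-\phi(v))\,\grad\phi(u)(x,t)\cdot\grad_x(\psi\rho_\eps)$ and $\sign(\phi(v)-\phi(u))\,\grad\phi(v)(y,s)\cdot\grad_y(\psi\rho_\eps)$, whose sum against the mollifier derivatives is $\sign(\phi(u)-\phi(v))\left(\grad\phi(u)+\grad\phi(v)\right)\cdot\grad_x\rho_\eps\,\psi$, a singular term that does not vanish as $\eps\to0$. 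Controlling it is the core of Carrillo's proof: one must first upgrade \eqref{entro_for} to smooth convex entropies $\eta_\delta$ --- this is where $F(u)\in(L^2_{\rm loc})^d$ and $b\in L^2_{\rm loc}([0,T);H^{-1}(\O))$ are used, to obtain $\partial_t u\in L^2_{\rm loc}([0,T);H^{-1}(\O))$ and a chain rule in time, as the paper's remark~\ref{F_L1} states explicitly; this, not the pairing of $b_u-b_v$ with $\phi(u)-\phi(v)$, is the role of \eqref{b_hyp} --- and then identify in the doubled second-order terms a nonnegative dissipation of the form $\int\!\!\int\eta''_\delta(\phi(u)-\phi(v))\,|\grad\phi(u)-\grad\phi(v)|^2(\cdots)$, which is discarded by sign (it does not cancel), leaving the announced $-\grad|\phi(u)-\phi(v)|\cdot\grad\psi$ in the limit. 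One also needs a separate argument on the set where $\phi$ is locally constant, where $\sign(u-v)$ and $\sign(\phi(u)-\phi(v))$ may differ and the a.e.\ vanishing of $\grad|\phi(u)-\phi(v)|$ does not by itself reconcile the two asymmetric inequalities. As written, your sketch does not close at this point.
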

We now have all the tools for the proof of proposition~\ref{tge0_prop}.\\
{\bf Proof of proposition~\ref{tge0_prop}}\\
Let $\gamma>0$, let $t^\star\in \Ll_\gamma=\Ll\cap[0,T-\gamma]$, and $h\in\Ll_\gamma$ such that $t^\star+ h\in \Ll_\gamma$ (this is the case of almost every $h\in (0,T-t^\star-\gamma)$).
Let $\zeta\in \Dd^+(\O)$, let $\a\in]0,T-t^\star-\gamma-h[$.
   
Taking $\psi(x,t)=\zeta(x)\chi_{[0,t^\star[}^\a(t)$, $v_0(x)=u(x,h)$, $v(x,t)=v(x,t+h)$ in \eqref{Car_for}, and letting $\a$ tend to $0$ yields:
\beqn
&\ds -\int_\O |\o u(x,t^\star) - \o u(x,t^\star+h) |\zeta(x) dx + \int_\O |u_0(x) -\o u(x,h)| \zeta(x) dx &\nn \\
&\ds \int_0^{t^\star}\!\! \int_\O \left[\begin{array}{c}\!\!
F(u(x,t)\top u(x,t+h))-F(u(x,t)\bot u(x,t+h))\!\!  \\
- \grad |\phi(u(x,t))-\phi(u(x,t+h))|\end{array}\right]\cdot\grad\zeta(x) dxdt\!\!\!\!&\nn\\
&\ds +\int_0^{t^\star}\int_\O 
\left[
\begin{array}{c}\textrm{sign}(u(x,t)-u(x,t+h)) \\(b(x,t)-b(x,t+h))
\end{array}\right]
\zeta(x) dxdt\ge 0.&
\label{Car_for2}\eeqn
We deduce from \eqref{Car_for2} that 
\beqn
&\ds \int_\O |\o u(x,t^\star) -\o u(x,t^\star+h) |\zeta(x) dx \le \int_\O |u_0(x) - \o u(x,h)| \zeta(x) dx&\nn\\
&\ds\!\!\!\!\!\!\!+ \! \int_0^{T-\gamma-h}\!\!\! \int_\O \left| F(u(x,t)\top u(x,t+h))-
F(u(x,t)\bot u(x,t+h)) \right| |\grad \zeta(x)| dxdt \nn \!\!\!\!\!\!\!\!& \\
& \ds+  \int_0^{T-\gamma-h} \int_\O \left|\grad\phi(u)(x,t+h)-\grad\phi(u)(x,t)\right|
 |\grad \zeta(x)| dxdt \nn & \\
&\ds+  \int_0^{T-\gamma-h} \int_\O \left|b(x,t+h)-b(x,t)\right|  \zeta(x) dxdt,&\nn 
\eeqn
and since $F(u), \grad\phi(u)$ and $b$ belong to $L^1_{\rm loc}(\O\times(0,T))$, one can claim that:
\beqn
&\forall \eps >0, \forall t^\star\in \Ll_\gamma, \exists \eta >0\ s.t.\ \forall h\in \Ll\cap[0,T-\gamma-t^\star], 
h\le \eta \Rightarrow \nn&\\
&\ds \!\!\!\!\!\!\! \int_\O |\o u(x,t^\star) -\o u(x,t^\star+h) |\zeta(x) dx \le \int_\O |u_0(x) -\o u(x,h)|
 \zeta(x) dx + \eps.&
\label{unif_cont1}
\eeqn
One can now use proposition~\ref{t0_prop} in \eqref{unif_cont1}, so that we get that 
$$
t\mapsto \o u(x,t) \textrm{ is uniformly continuous from }\Ll \textrm{ to } L^1(\O,\zeta),
$$
which is the $L^1$-space for measure of density $\zeta$ w.r.t. Lebesgue measure.
We deduce that, for all $\gamma\in (0,T)$, $t\mapsto \o u$ is uniformly continuous from 
$\Ll_\gamma$ to $L^1_{\rm loc}(\O)$, and this insures that $\Ll_\gamma=[0,T-\gamma]$.
This holds for any $\gamma\in (0,T)$, and so we can claim that 
$\o u \in C([0,T);L^1_{\rm loc}(\O))$.
\hfill$\blacksquare$

It remains to prove the last part of theorem~\ref{main} by considering some test functions 
$\zeta\in \Dd^+(\o\O)$ instead of $\zeta\in \Dd^+(\O)$. We will need some additional regularity on the solution:
\be\left\{\begin{array}{c}
\textrm{There exists an open neighborhood }\Uu \textrm{ of }\partial\O \textrm{ in }\o\O \textrm{ s.t. }\\
u_0\in L^p_{\rm loc}(\Uu), \qquad u\in L^\infty_{\rm loc}([0,T);L^p_{\rm loc}(\Uu)).
\end{array}\right\}\label{H_bord}\tag{H5}\ee
\eqref{H_bord} gives the uniform (w.r.t. $t$) local equiintegrability of $u$ (and so of $\o u$) on a 
neighborhood of $\Uu$. We deduce, using $\o u\in C([0,T);L^1_{\rm loc}(\O))$ that 
$\o u\in C([0,T);L^1_{\rm loc}(\o \O))$.\\
{\bf End of the proof of theorem~\ref{main}}\\
Suppose that \eqref{F_hyp},\eqref{phi_hyp},\eqref{u0_hyp},\eqref{b_hyp} hold, then thanks to 
proposition~\ref{tge0_prop}, there exists a weak solution $\o u\in C([0,T),L^1_{\rm loc}(\O))$. \\
For $\eps>0$, $\gamma\in (0,T)$, $\zeta\in \Dd^+(\O)$, there exists $\eta>0$ such that: 
$\forall t\in [0,T-\gamma]$, $\forall h\in [0, \min(\eta, T-t-\gamma)]$, 
$$
\int_\O |\o u(x,t+h)-\o u(x,t)| \zeta(x)dx \le \eps.
$$
Let $K$ be a compact subset of $\o\O$. Then there exists $\zeta\in \Dd^+(\o\O)$ such that 
$0\le \zeta(x) \le 1$ for all $x\in \R^d$, and
$\zeta(x)=1$ if $x\in \o\O$. Let $\a>0$ and let $\beta_\a \in C^\infty(\R^d;\R)$ such that:
\begin{eqnarray*}
&&0\le \beta_\a(x) \le 1\qquad \textrm{ for all } x\in \R^d,\\
&&\beta_\a(x) = 1 \qquad\qquad \textrm{ if } d(x, \partial\O)\le \a/2,\\
&& \beta_\a(x) = 0 \qquad \qquad\textrm{ if } d(x,\partial\O)\ge \a.  
\end{eqnarray*}
Suppose that \eqref{H_bord} holds. 
For $\a$ small enough, one has $supp(\zeta\b_\a) \subset \Uu$ and then, for all $t\in [0,T-\gamma]$, 
for all $h\in [0,T-t-\gamma]$, 
$$
\int_\O |\o u(x,t+h)-\o u(x,t)| \zeta(x)\beta_\a dx\le 2 \|u\|_{L^\infty((0,T-\gamma);L^p(\Uu_\zeta))}
\|\b_\a\|_{L^{p'}(\Uu_\zeta)},
$$
where $\Uu_\zeta$ denotes $\Uu\cap supp(\zeta)$, and $p'=\frac{p}{p-1}<+\infty$.
Since $\|\b_\a\|_{L^{p'}(\Uu_\zeta)}$ tends to $0$ as $\a$ tends to $0$, there exists $\delta>0$ such 
that: 
\be\label{conclu1}
\a\le \delta \Rightarrow \int_\O |\o u(x,t+h)-\o u(x,t)| \zeta(x)\beta_\a dx\le \eps.
\ee
Suppose now that $\a$ has been chosen such that \eqref{conclu1} holds. The function $\zeta(1-\b_\a)$ 
belongs to $\Dd^+(\O)$, and then there exists $\eta$ such that :$\forall t\in [0,T-\gamma]$, 
$\forall h\in [0, \min(\eta, T-\gamma-t)]$, 
\be\label{conclu2}
\int_\O |\o u(x,t+h)-\o u(x,t)| \zeta(x)(1-\beta_\a(x))dx \le \eps.
\ee
Adding \eqref{conclu1} and \eqref{conclu2} shows that for all $t$ in $[0,T-\gamma-\eta]$, 
for all $h\in [0,\eta]$,
\be\label{conclu3}
\int_K |\o u(x,t+h)-\o u(x,t)| dx \le 2\eps.
\ee
So $\o u$ is uniformly continuous from $[0,T-\gamma]$ to $L^1(K)$, and then
$$
\o u \in C([0,T);L^1_{\rm loc}(\o\O)).
$$
\hfill$\blacksquare$

To conclude this paper, let us give a counter-example to the time continuity in the case where the 
entropy criterion is not fulfilled for t=0. Consider the Burgers equation, in the one dimensional case, 
leading to the following initial value problem.
\be\label{burgers}
\left\{\begin{array}{l}
\partial_t u - \partial_x \left(  u^2 \right) =0, \quad (x,t)\in (\R\times \R_+),\\
u(\cdot,0)= u _0 = 0.
\end{array}\right.
\ee
Problem \eqref{burgers} admits $u=0$ as unique entropy solution in the sense of 
definition~\ref{entro_def}.

We define 
$$\tilde{u}(x,t)= \left\{\begin{array}{rcl}
0 & \textrm{if} & t=0,\\
0& \textrm{if} & |x|>\sqrt{t},\\
\ds \frac{x}{2t} & \textrm{if} &|x|<\sqrt{t}.
\end{array}
\right.$$
Then it is easy to check that:
\begin{itemize}
\item $\tilde{u}\in L^1_{\rm loc}(\R\times \R_+)$,
\item $\tilde{u}^2\in L^1_{\rm loc}(\R\times \R_+)$,
\item $\forall \psi\in \Dd(\O\times \R_+)$, 
\beqn
&\ds \int_0^{+\infty}\int_\R \tilde{u}(x,t)\partial_t \psi (x,t)dxdt &\nn\\
&\ds+ \int_0^{+\infty}\int_\R \tilde{u}^2(x,t) \partial_x \psi(x,t)dxdt =0,&\label{burgers_weak}
\eeqn
\item $\forall \psi\in \Dd^+(\O\times \R_+^\star)$, $\forall \k\in \R$, 
\beqn
&\ds \int_0^{+\infty}\int_\R |\tilde{u}-\k|(x,t)\partial_t \psi (x,t)dxdt&\nn\\
&\ds + \int_0^{+\infty}\int_\R \sign(\tilde{u}-\k)\tilde{u}^2(x,t) \partial_x \psi(x,t)dxdt =0.
&\label{burgers_entro}
\eeqn
\end{itemize}
Thanks to \eqref{burgers_weak}, $\tilde{u}$ is a weak solution of \eqref{burgers}, and an 
entropy criterion \eqref{burgers_entro} is fulfilled only for $t>0$. The fact that the entropy 
criterion fails for $t>0$, and that the flux $\tilde{u}^2$ is not bounded (see \cite{CR00}) 
allows the function $\tilde{u}$ to be discontinuous  at $t=0$. Indeed, for all $t>0$, 
$$
\| \tilde u(\cdot,t) \|_{L^1(\R)} = \frac{1}{2} \neq \|u_0 \|_{L^1(\R)}=0.
$$

\bibliographystyle{plain}

\begin{thebibliography}{10}

\bibitem{AL83}
H.~W. Alt and S.~Luckhaus.
\newblock Quasilinear elliptic-parabolic differential equations.
\newblock {\em Math. Z.}, 183(3):311--341, 1983.

\bibitem{BP05}
D.~Blanchard and A.~Porretta.
\newblock Stefan problems with nonlinear diffusion and convection.
\newblock {\em J. Differential Equations}, 210(2):383--428, 2005.

\bibitem{CGP07}
C.~Canc\`es, T.~Gallou\"et, and A.~Porretta.
\newblock Two-phase flows involving capillary barriers in heterogeneous porous
  media.
\newblock to appear in Interfaces Free Bound., 2009.

\bibitem{Car99}
J.~Carrillo.
\newblock Entropy solutions for nonlinear degenerate problems.
\newblock {\em Arch. Ration. Mech. Anal.}, 147(4):269--361, 1999.

\bibitem{CR00}
G.~Chen and M.~Rascle.
\newblock Initial layers and uniqueness of weak entropy solutions to hyperbolic
  conservation laws.
\newblock {\em Arch. Ration. Mech. Anal.}, 153(3):205--220, 2000.

\bibitem{F64}
A.~Friedman.
\newblock {\em Partial differential equations of parabolic type}.
\newblock Prentice-Hall Inc., Englewood Cliffs, N.J., 1964.

\bibitem{GMT94}
G.~Gagneux and M.~Madaune-Tort.
\newblock Unicit\'e des solutions faibles d'\'equations de
  diffusion-convection.
\newblock {\em C. R. Acad. Sci. Paris S\'er. I Math.}, 318(10):919--924, 1994.

\bibitem{K70}
S.~N. Kru{\v{z}}kov.
\newblock First order quasilinear equations with several independent variables.
\newblock {\em Mat. Sb. (N.S.)}, 81 (123):228--255, 1970.

\bibitem{MNRR96}
J.~M{\'a}lek, J.~Ne{\v{c}}as, M.~Rokyta, and M.~R{\ocirc{u}}{\v{z}}i{\v{c}}ka.
\newblock {\em Weak and measure-valued solutions to evolutionary {PDE}s},
  volume~13 of {\em Applied Mathematics and Mathematical Computation}.
\newblock Chapman \& Hall, London, 1996.

\bibitem{Otto96}
F.~Otto.
\newblock ${L}^1$-contraction and uniqueness for quasilinear elliptic-parabolic
  equations.
\newblock {\em J. Differential Equations}, 131:20--38, 1996.

\bibitem{Exemple_Michel}
M.~Pierre.
\newblock Personal discussion.

\bibitem{Smo94}
J.~Smoller.
\newblock {\em Shock waves and reaction-diffusion equations}, volume 258 of
  {\em Fundamental Principles of Mathematical Sciences}.
\newblock Springer-Verlag, New York, second edition, 1994.

\end{thebibliography}
\def\ocirc#1{\ifmmode\setbox0=\hbox{$#1$}\dimen0=\ht0 \advance\dimen0
  by1pt\rlap{\hbox to\wd0{\hss\raise\dimen0
  \hbox{\hskip.2em$\scriptscriptstyle\circ$}\hss}}#1\else {\accent"17 #1}\fi}

\end{document}